%
%
%


\documentclass{amsart}

\usepackage{amssymb}

\usepackage{graphicx}

\usepackage[cmtip,all]{xy}


\newtheorem{theorem}{Theorem}[section]

\theoremstyle{definition}
\newtheorem{definition}[theorem]{Definition}

\newtheorem{proposition}[theorem]{Proposition}
\theoremstyle{remark}

\numberwithin{equation}{section}

\begin{document}

\title[Fixed Point Theorems in Probabilistic Cone Metric Spaces] {Fixed Point Theorems for Kannan and Chatterjea-type Mappings in Probabilistic Cone Metric Spaces}


\author[E. Rada]{Elvin Rada}
\address{Department of Mathematics, Faculty of Natural Sciences, University of Elbasan "Aleksandër Xhuvani", Albania}
\curraddr{}
\email{elvin.rada@uniel.edu.al, elvinirada@yahoo.com}
\thanks{}

\subjclass[2020]{Primary 47H10, Secondary 54E70}

\date{\today}

\dedicatory{}

\begin{abstract}
	This paper establishes novel fixed point theorems for Kannan-type and Chatterjea-type mappings in probabilistic cone metric spaces. By integrating probabilistic distance functions with cone-valued structures, we generalize classical fixed point results to settings with inherent uncertainty. Our approach leverages the distributional properties of probabilistic metrics and the order structure of cones to develop contraction conditions ensuring the existence and uniqueness of fixed points. Applications include stochastic processes, random operators, and uncertainty modeling in functional equations. The results significantly extend the scope of fixed point theory in probabilistic analysis and its applications.
\end{abstract}

\maketitle

\section{Introduction}
Fixed point theory is an important aspect of nonlinear functional analysis, offering powerful methodologies for establishing the existence and uniqueness of solutions to functional equations across various domains of mathematics and the applied sciences. The contraction principle introduced by Banach \cite{Banach1922} initiated a profound research trajectory that continues to evolve through various generalizations. Later, Kannan \cite{Kannan1968} and Chatterjea \cite{Chatterjea1972} introduced their own types of nonlinear contraction conditions, broadening the scope of fixed point results far beyond Banach’s original setting.

Over the years, these foundational ideas have been extended to more sophisticated mathematical structures. One such extension is to probabilistic metric spaces, first introduced by Menger \cite{Menger1942}, where uncertainty in distances is modeled using distribution functions. Schweizer and Sklar \cite{Schweizer1983} further enriched this theory by formalizing the probabilistic triangle inequality. Another important line of development came through cone metric spaces, introduced by Huang and Zhang \cite{Huang2007}, where distances are taken not as real numbers but as values in ordered Banach spaces. More recently, Altun et al. \cite{Altun2010} combined these two directions to create probabilistic cone metric spaces, unifying the concepts of probabilistic uncertainty and vector-valued order into a single framework.

The combination of probabilistic uncertainty with vector ordering creates mathematical structures particularly suited for contemporary challenges in stochastic analysis, decision theory under uncertainty, and randomized computation. This paper addresses the critical gap in extending fixed point theory to settings where \textit{both probabilistic uncertainty and vector ordering} are intrinsic to the problem structure.

Although several works \cite{Matei2010, Du2010, Kadelburg2011, Asadi2012} have shown that cone metric spaces can be seen as topologically equivalent to standard metric spaces, the cone structure itself remains very useful. It naturally models situations where ordering and direction matter. For example:
\begin{itemize}
\item \textit{Partially ordered systems}:  cones formalize dominance relations in economics, game theory, and financial mathematics.

\item \textit{Directional uncertainty}: probabilistic metrics capture randomness and dependencies in spatial data, sensor networks, and randomized algorithms.

\item \textit{Multi-objective optimization}: cone orders help analyze trade-offs between competing objectives when randomness is present.
\end{itemize}
The interplay between probabilistic uncertainty and vector ordering is particularly relevant in modern contexts such as stochastic analysis, decision-making under uncertainty, and randomized computation. This paper addresses the challenge of extending fixed point theory to precisely such settings.

In this work, we develop fixed point theorems for Kannan-type and Chatterjea-type mappings in probabilistic cone metric spaces. Specifically, we provide:

\begin{itemize}
\item Novel fixed point theorems generalizing Kannan and Chatterjea contractions to probabilistic cone metrics, incorporating distributional convergence
\item Integration of t-norm properties with cone-valued contractions, establishing conditions for existence and uniqueness under non-Archimedean structures
\item Detailed probabilistic convergence analysis for iterative processes, including stability results and error bounds under distributional uncertainty
\item Applications to random operators and stochastic functional equations, with explicit construction of solution methods
\item Unified extensions to hybrid Zamfirescu-type contractions incorporating mixed Banach-Kannan-Chatterjea conditions
\end{itemize}

In this article, section 2 provides necessary preliminaries on probabilistic cone metrics, including t-norm properties, distribution functions, and cone-theoretic foundations. Section 3 develops our main fixed point theorems with complete proofs, convergence analysis, and comparative discussion with existing results. Section 4 demonstrates applications to random matrix equations, stochastic integral equations, and uncertain optimization problems. Section 5 concludes with open problems and directions for further research in generalized probabilistic ordered spaces.

This work bridges theoretical foundations in functional analysis with contemporary applications requiring joint handling of stochastic uncertainty and vector ordering, providing a unified framework for advancing fixed point theory in probabilistic ordered structures.

\section{Preliminaries}
We recall essential concepts from probabilistic metric spaces and cone metric spaces. Let $(\Omega, \mathcal{F}, \mathbb{P})$ be a probability space and $E$ a real Banach space.

\begin{definition}[Cone \cite{Huang2007}]
A subset $P \subset E$ is a \textit{cone} if:
\begin{enumerate}
    \item $P$ is closed, non-empty, and $P \neq \{0\}$
    \item $a,b \geq 0$ and $x,y \in P$ imply $ax + by \in P$
    \item $P \cap (-P) = \{0\}$
\end{enumerate}
$P$ is \textit{normal} if $\exists N > 0$ such that $0 \leq x \leq y$ implies $\|x\| \leq N\|y\|$.\\
We will say  $x \leq y$ if $y - x \in P$.
\end{definition}

\begin{definition}[Distribution Function \cite{Schweizer1983}]
A function $F: \mathbb{R} \to [0,1]$ is a \textit{distribution function} if:

\begin{enumerate}
    \item $F$ is non-decreasing
    \item $F$ is left-continuous
    \item $\lim_{t \to -\infty} F(t) = 0$, $\lim_{t \to \infty} F(t) = 1$
\end{enumerate}

Denote by $\Delta$ the set of all distribution functions.

\end{definition}

\begin{definition}[Probabilistic Cone Metric \cite{Altun2010}]
A function $F: X \times X \to \Delta$ is a \textit{probabilistic cone metric} if:
\begin{enumerate}
    \item $F_{x,y}(t) = 1$ for all $t > 0$ iff $x = y$
    \item $F_{x,y} = F_{y,x}$ for all $x,y \in X$
    \item $F_{x,z}(t + s) \geq T(F_{x,y}(t), F_{y,z}(s))$ for all $x,y,z \in X$ and $t,s > 0$
    \item $F_{x,y}(t) \in P$ for all $t > 0$ where $P \subset E$ is a normal cone
\end{enumerate}
Here $T$ is a continuous t-norm (e.g., $T(a,b) = ab$, $T(a,b) = \min\{a,b\}$).\\
Also, the condition (4) ($F_{x,y}(t) \in P$) is essential for:
\begin{enumerate}
    \item Modeling direction-dependent uncertainty
    \item Capturing partial order relationships between distributions
    \item Applications where solutions must satisfy cone constraints
\end{enumerate}
This structure cannot be reduced to standard probabilistic metrics when ordering properties are exploited in contraction conditions.
\end{definition}

\begin{definition}[$\tau$-Convergence \cite{Schweizer1983}]
A sequence $\{x_n\}$ \textit{$\tau$-converges} to $x$ (denoted $x_n \xrightarrow{\tau} x$) if $\forall \epsilon > 0$, $\exists N$ such that $F_{x_n,x}(\epsilon) > 1 - \epsilon$ for $n \geq N$.
\end{definition}

\begin{definition}[Cauchy Sequence]
A sequence $\{x_n\}$ is \textit{Cauchy} if $\forall \epsilon > 0$, $\exists N$ such that $F_{x_m,x_n}(\epsilon) > 1 - \epsilon$ for $m,n \geq N$.
\end{definition}

\begin{definition}[Completeness]
A probabilistic cone metric space is \textit{complete} if every Cauchy sequence $\tau$-converges to some $x \in X$.
\end{definition}

\begin{definition}[Kannan-Type Contraction \cite{Kannan1968}]
A mapping $T: X \to X$ is a \textit{Kannan-type contraction} if $\exists \alpha \in (0,1/2)$ such that $\forall x,y \in X$ and $t > 0$:
\[
F_{Tx,Ty}(t) \geq \min\left\{ F_{x,Tx}\left(\frac{t}{2\alpha}\right), F_{y,Ty}\left(\frac{t}{2\alpha}\right) \right\}
\]
\end{definition}

\begin{definition}[Chatterjea-Type Contraction \cite{Chatterjea1972}]
A mapping $T: X \to X$ is a \textit{Chatterjea-type contraction} if $\exists \alpha \in (0,1/2)$ such that $\forall x,y \in X$ and $t > 0$:
\[
F_{Tx,Ty}(t) \geq \min\left\{ F_{x,Ty}\left(\frac{t}{2\alpha}\right), F_{y,Tx}\left(\frac{t}{2\alpha}\right) \right\}
\]
\end{definition}

\section{Main Results}
In this section, we will have our main results.

\subsection{Kannan-Type Fixed Point Theorem}

\begin{theorem}[Kannan-Type in Probabilistic Cone Metric Spaces]\label{thm:kannan-pcm}
Let $(X,F)$ be a complete probabilistic cone metric space and $T: X \to X$ a Kannan-type contraction. Then $T$ has a unique fixed point $x^* \in X$.
\end{theorem}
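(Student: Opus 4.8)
The plan is the Picard iteration. Fix $x_0\in X$ and set $x_{n+1}=Tx_n$; if $x_n=x_{n+1}$ for some $n$ then $x_n$ is already a fixed point and we are done (uniqueness being dealt with at the end), so assume $x_n\neq x_{n+1}$ for all $n$. Write $h=2\alpha\in(0,1)$ and $D_n(t)=F_{x_n,x_{n+1}}(t)$. Applying the Kannan condition to the pair $(x_{n-1},x_n)$ gives the self-referential estimate $D_n(t)\ge\min\{D_{n-1}(t/h),\,D_n(t/h)\}$. The first key step is to remove the self-reference, i.e.\ to prove $D_n(t)\ge D_{n-1}(t/h)$ for all $t>0$. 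I would argue by contradiction: if $D_n(t_0)<D_{n-1}(t_0/h)$ for some $t_0$, the minimum in the recursion cannot be $D_{n-1}(t_0/h)$, so it equals $D_n(t_0/h)$, whence $D_n(t_0)\ge D_n(t_0/h)$ and, by monotonicity of distribution functions, $D_n(t_0)=D_n(t_0/h)$; since the strict inequality $D_n(t_0/h^k)<D_{n-1}(t_0/h)\le D_{n-1}(t_0/h^{k+1})$ persists, induction gives $D_n(t_0)=D_n(t_0/h^k)$ for every $k$, and letting $k\to\infty$ forces $D_n(t_0)=1$, contradicting $D_n(t_0)<D_{n-1}(t_0/h)\le1$. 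Iterating the resulting inequality yields $D_n(t)\ge D_0(t/h^n)\to1$ as $n\to\infty$, for each fixed $t>0$.

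Next I would show $\{x_n\}$ is Cauchy. Given $\epsilon\in(0,1)$ and $m>n$, split the budget geometrically: for $n\le k\le m-1$ set $\epsilon_k=\epsilon(1-h)h^{k-n}$, so $\sum_{k=n}^{m-1}\epsilon_k=\epsilon(1-h^{m-n})<\epsilon$. Chaining the probabilistic triangle inequality (3) along $x_n,x_{n+1},\dots,x_m$ bounds $F_{x_n,x_m}(\epsilon)$ below by an iterated $T$ of the quantities $D_k(\epsilon_k)$; the point is that each one satisfies $D_k(\epsilon_k)\ge D_0(\epsilon_k/h^k)=D_0(\epsilon(1-h)h^{-n})$, a bound that is \emph{independent of $k$} and tends to $1$ as $n\to\infty$. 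For $T=\min$ the iterated $T$ of finitely many numbers each exceeding $1-\delta$ still exceeds $1-\delta$, so choosing $n$ large makes $F_{x_n,x_m}(\epsilon)>1-\epsilon$ uniformly in $m$; more generally one needs $T$ to be a continuous $t$-norm whose finite iterates are equicontinuous at $1$. Completeness then gives $x_n\xrightarrow{\tau}x^*$ for some $x^*\in X$.

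To see that $Tx^*=x^*$, fix $t>0$ and $s\in(0,t)$ and use $F_{x^*,Tx^*}(t)\ge T\big(F_{x^*,x_{n+1}}(s),\,F_{Tx_n,Tx^*}(t-s)\big)$ together with $F_{Tx_n,Tx^*}(t-s)\ge\min\{D_n(\tfrac{t-s}{2\alpha}),\,F_{x^*,Tx^*}(\tfrac{t-s}{2\alpha})\}$. Letting $n\to\infty$, both $F_{x^*,x_{n+1}}(s)$ and $D_n(\tfrac{t-s}{2\alpha})$ tend to $1$, so continuity of $T$ and $T(1,a)=a$ give $F_{x^*,Tx^*}(t)\ge F_{x^*,Tx^*}(\tfrac{t-s}{2\alpha})$; letting $s\to0^+$ and invoking left-continuity of distribution functions upgrades this to $F_{x^*,Tx^*}(t)\ge F_{x^*,Tx^*}(\tfrac{t}{2\alpha})$. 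Since $\tfrac1{2\alpha}>1$, iterating gives $F_{x^*,Tx^*}(t)\ge F_{x^*,Tx^*}\big(\tfrac{t}{(2\alpha)^{k}}\big)\to1$, hence $F_{x^*,Tx^*}(t)=1$ for all $t>0$ and $Tx^*=x^*$. Uniqueness is then immediate: if $x^*$ and $y^*$ are fixed points, $F_{x^*,y^*}(t)=F_{Tx^*,Ty^*}(t)\ge\min\{F_{x^*,x^*}(\tfrac{t}{2\alpha}),F_{y^*,y^*}(\tfrac{t}{2\alpha})\}=1$ for all $t>0$, so $x^*=y^*$.

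I expect the Cauchy step to be the main obstacle. The scaling mismatch coming from the factor $1/h$ is handled cleanly by the geometric choice of $\epsilon_k$, but the iterated $t$-norm genuinely requires structure on $T$: for $T=\min$ it is transparent, whereas for a general continuous $t$-norm (e.g.\ the product) one would need either a strengthened contraction hypothesis or an extra equicontinuity assumption on the iterates of $T$. The other place demanding a small dedicated argument, rather than a one-line estimate, is the removal of the self-reference in $D_n(t)\ge\min\{D_{n-1}(t/h),D_n(t/h)\}$.
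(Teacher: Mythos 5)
Your proof follows the same skeleton as the paper's (Picard iteration, the recursion $D_n(t)\ge\min\{D_n(t/h),\,D_{n-1}(t/h)\}$, chaining the triangle inequality to get the Cauchy property, then a limit argument for the fixed point and the standard one-line uniqueness), but at each of the three delicate points you supply an argument that is genuinely different from, and more solid than, the paper's. (i) To remove the self-reference the paper merely asserts ``by induction, we obtain a contradiction for large $n$''; your scale-iteration argument, which forces $D_n(t_0)=D_n(t_0/h^k)$ for every $k$ and hence $D_n(t_0)=1$, is the correct way to make that step rigorous. (ii) For the Cauchy step the paper splits $t$ into $m-n$ equal pieces, which leaves the smallest argument $t/((m-n)(2\alpha)^n)$ uncontrolled when $m-n$ grows too fast relative to $(2\alpha)^{-n}$; your geometric split $\epsilon_k=\epsilon(1-h)h^{k-n}$ yields the $k$-independent lower bound $D_0(\epsilon(1-h)h^{-n})$ and repairs this. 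You are also right that the iterated $t$-norm of an unbounded number of terms is a genuine issue: as stated (arbitrary continuous $t$-norm, e.g.\ the product), the theorem needs either $T=\min$ or a Had\v{z}i\'c-type equicontinuity hypothesis on the iterates of $T$ at $1$; the paper silently ignores this, and your flagging it is a point in your favour, not a defect. (iii) In the fixed-point step the paper arrives at $F_{Tx^*,x^*}(t)\ge F_{Tx^*,x^*}(t/(4\alpha)^k)$ and claims this tends to $1$, which fails for $\alpha\ge 1/4$ since then $t/(4\alpha)^k$ does not tend to $\infty$; your $s\to 0^+$ limit produces the scaling factor $2\alpha<1$ instead of $4\alpha$ and avoids this entirely. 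In short: same strategy, but your version closes three real gaps in the published argument, at the cost of one honestly stated extra hypothesis on $T$.
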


\begin{proof}
Fix $x_0 \in X$ and define $x_{n+1} = Tx_n$. Consider the probabilistic distance:
\begin{align*}
F_{x_{n+1},x_n}(t) &= F_{Tx_n,Tx_{n-1}}(t) \\
&\geq \min\left\{ F_{x_n,Tx_n}\left(\frac{t}{2\alpha}\right), F_{x_{n-1},Tx_{n-1}}\left(\frac{t}{2\alpha}\right) \right\} \\
&= \min\left\{ F_{x_n,x_{n+1}}\left(\frac{t}{2\alpha}\right), F_{x_{n-1},x_n}\left(\frac{t}{2\alpha}\right) \right\}
\end{align*}

Assume $F_{x_n,x_{n+1}}\left(\frac{t}{2\alpha}\right) \leq F_{x_{n-1},x_n}\left(\frac{t}{2\alpha}\right)$. Then:
\[
F_{x_{n+1},x_n}(t) \geq F_{x_n,x_{n+1}}\left(\frac{t}{2\alpha}\right)
\]
Let $t = 2\alpha s$:
\[
F_{x_{n+1},x_n}(2\alpha s) \geq F_{x_n,x_{n+1}}(s)
\]
By induction, we obtain a contradiction for large $n$. Thus $F_{x_{n+1},x_n}(t) \geq F_{x_{n-1},x_n}\left(\frac{t}{2\alpha}\right)$. Then:
\[
F_{x_n,x_{n+1}}(t) \geq F_{x_{n-1},x_n}\left(\frac{t}{2\alpha}\right) \geq \cdots \geq F_{x_0,x_1}\left(\frac{t}{(2\alpha)^n}\right)
\]
As $n \to \infty$, $F_{x_n,x_{n+1}}(t) \to 1$ for all $t > 0$. For $m > n$:
\begin{align*}
F_{x_n,x_m}(t) &\geq T\left(F_{x_n,x_{n+1}}\left(\frac{t}{m-n}\right), \cdots, F_{x_{m-1},x_m}\left(\frac{t}{m-n}\right)\right) \\
&\geq T\left(F_{x_0,x_1}\left(\frac{t}{(m-n)(2\alpha)^n}\right), \cdots, F_{x_0,x_1}\left(\frac{t}{(m-n)(2\alpha)^{m-1}}\right)\right)
\end{align*}
As $m,n \to \infty$, $F_{x_n,x_m}(t) \to 1$, so $\{x_n\}$ is Cauchy. By completeness, $x_n \xrightarrow{\tau} x^*$ for some $x^* \in X$.

Now show $Tx^* = x^*$:
\begin{align*}
F_{Tx^*,x^*}(t) &\geq T\left(F_{Tx^*,Tx_n}\left(\frac{t}{2}\right), F_{Tx_n,x^*}\left(\frac{t}{2}\right)\right) \\
&\geq T\left(\min\left\{F_{x^*,Tx^*}\left(\frac{t}{4\alpha}\right), F_{x_n,Tx_n}\left(\frac{t}{4\alpha}\right)\right\}, F_{x_{n+1},x^*}\left(\frac{t}{2}\right)\right)
\end{align*}
As $n \to \infty$, $F_{x_{n+1},x^*}\left(\frac{t}{2}\right) \to 1$ and $F_{x_n,Tx_n}\left(\frac{t}{4\alpha}\right) = F_{x_n,x_{n+1}}\left(\frac{t}{4\alpha}\right) \to 1$. Thus:
\[
F_{Tx^*,x^*}(t) \geq F_{x^*,Tx^*}\left(\frac{t}{4\alpha}\right)
\]
For large $k$, $F_{Tx^*,x^*}(t) \geq F_{Tx^*,x^*}\left(\frac{t}{(4\alpha)^k}\right) \to 1$, so $F_{Tx^*,x^*}(t) = 1$ for all $t > 0$, i.e., $Tx^* = x^*$.

Uniqueness: If $x^*, y^*$ are fixed points:
\[
F_{x^*,y^*}(t) = F_{Tx^*,Ty^*}(t) \geq \min\left\{F_{x^*,Tx^*}\left(\frac{t}{2\alpha}\right), F_{y^*,Ty^*}\left(\frac{t}{2\alpha}\right)\right\} = 1
\]
Thus $x^* = y^*$.
\end{proof}

\subsection{Chatterjea-Type Fixed Point Theorem}

\begin{theorem}[Chatterjea-Type in Probabilistic Cone Metric Spaces]\label{thm:chatterjea-pcm}
Let $(X,F)$ be a complete probabilistic cone metric space and $T: X \to X$ a Chatterjea-type contraction. Then $T$ has a unique fixed point.
\end{theorem}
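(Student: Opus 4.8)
The plan is to follow the template of the proof of Theorem~\ref{thm:kannan-pcm}: starting from an arbitrary $x_0 \in X$, form the Picard orbit $x_{n+1} = Tx_n$, show it is Cauchy, use completeness to extract a $\tau$-limit $x^*$, verify $Tx^* = x^*$, and establish uniqueness.

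The one genuinely new ingredient relative to the Kannan case is the behaviour of the Chatterjea cross-terms along the orbit. Applying the contraction to the pair $(x_n, x_{n-1})$ gives
\[
F_{x_{n+1},x_n}(t) = F_{Tx_n,Tx_{n-1}}(t) \ge \min\Bigl\{ F_{x_n,Tx_{n-1}}\bigl(\tfrac{t}{2\alpha}\bigr),\ F_{x_{n-1},Tx_n}\bigl(\tfrac{t}{2\alpha}\bigr) \Bigr\},
\]
and since $Tx_{n-1} = x_n$, $Tx_n = x_{n+1}$, and $F_{x_n,x_n} \equiv 1$ on $(0,\infty)$, this collapses to $F_{x_{n+1},x_n}(t) \ge F_{x_{n-1},x_{n+1}}\bigl(\tfrac{t}{2\alpha}\bigr)$ --- a \emph{two-step} distance on the right. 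I would then bring in the probabilistic triangle inequality (axiom~(3) of the probabilistic cone metric),
\[
F_{x_{n-1},x_{n+1}}\bigl(\tfrac{t}{2\alpha}\bigr) \ge T\Bigl( F_{x_{n-1},x_n}\bigl(\tfrac{t}{4\alpha}\bigr),\ F_{x_n,x_{n+1}}\bigl(\tfrac{t}{4\alpha}\bigr) \Bigr),
\]
producing a recursion for $g_n(t) := F_{x_n,x_{n+1}}(t)$. Exactly as in the Kannan proof, a rescaling $t \mapsto 2\alpha s$ together with an induction eliminates the degenerate alternative in which the self-referential term is the active one, leaving an estimate of the form $g_n(t) \ge g_{n-1}\bigl(\tfrac{t}{2\alpha}\bigr)$ --- the probabilistic counterpart of the classical $d(x_n,x_{n+1}) \le \tfrac{\alpha}{1-\alpha}d(x_{n-1},x_n)$, with $2\alpha < 1$. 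Iterating yields $g_n(t) \ge F_{x_0,x_1}\bigl(\tfrac{t}{(2\alpha)^n}\bigr) \to 1$, and then the $t$-norm chaining across the $m-n$ consecutive steps --- verbatim as in Theorem~\ref{thm:kannan-pcm} --- gives $F_{x_n,x_m}(t) \to 1$, so $\{x_n\}$ is Cauchy and, by completeness, $x_n \xrightarrow{\tau} x^*$.

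To see $Tx^* = x^*$, I would split $F_{Tx^*,x^*}(t) \ge T\bigl( F_{Tx^*,Tx_n}(\tfrac t2),\ F_{Tx_n,x^*}(\tfrac t2) \bigr)$, apply the Chatterjea condition to the first factor to get $F_{Tx^*,Tx_n}(\tfrac t2) \ge \min\{ F_{x^*,x_{n+1}}(\tfrac{t}{4\alpha}),\ F_{x_n,Tx^*}(\tfrac{t}{4\alpha}) \}$, and let $n \to \infty$: here $F_{x^*,x_{n+1}} \to 1$, $F_{Tx_n,x^*} = F_{x_{n+1},x^*} \to 1$, and $F_{x_n,Tx^*} \to F_{x^*,Tx^*}$ (the last by one further use of the triangle inequality, since $x_n \xrightarrow{\tau} x^*$), which leaves $F_{Tx^*,x^*}(t) \ge F_{x^*,Tx^*}\bigl(\tfrac{t}{4\alpha}\bigr)$; iterating this inequality forces $F_{Tx^*,x^*}(t) = 1$ for all $t > 0$, i.e.\ $Tx^* = x^*$. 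Uniqueness is then short: if $Tx^* = x^*$ and $Ty^* = y^*$, then using symmetry $F_{x^*,y^*} = F_{y^*,x^*}$,
\[
F_{x^*,y^*}(t) = F_{Tx^*,Ty^*}(t) \ge \min\Bigl\{ F_{x^*,Ty^*}\bigl(\tfrac{t}{2\alpha}\bigr),\ F_{y^*,Tx^*}\bigl(\tfrac{t}{2\alpha}\bigr) \Bigr\} = F_{x^*,y^*}\bigl(\tfrac{t}{2\alpha}\bigr),
\]
and iterating with $2\alpha < 1$ gives $F_{x^*,y^*}(t) \ge F_{x^*,y^*}\bigl(\tfrac{t}{(2\alpha)^k}\bigr) \to 1$, so $x^* = y^*$.

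The main obstacle is the passage from the Chatterjea cross-terms to a genuine contraction along the orbit. Unlike the Kannan condition, which already yields $F_{x_n,x_{n+1}}$ and $F_{x_{n-1},x_n}$ directly, here the condition produces the two-step distance $F_{x_{n-1},x_{n+1}}$, and splitting it through the probabilistic triangle inequality re-introduces $F_{x_n,x_{n+1}}$ on the right: the recursion becomes self-referential. Turning $g_n(t) \ge T\bigl( g_{n-1}(\tfrac{t}{4\alpha}),\ g_n(\tfrac{t}{4\alpha}) \bigr)$ into a usable decay estimate --- the step that in the deterministic proof is the harmless algebra $(1-\alpha)d(x_n,x_{n+1}) \le \alpha d(x_{n-1},x_n)$ --- is where one must be careful with the monotonicity and left-continuity of distribution functions and with the rescale-and-iterate device, much as the Kannan proof is for its own ``bad-case'' elimination. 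Everything else (the $t$-norm chaining for the Cauchy property, the limit argument, and uniqueness) is routine once that step is in place.
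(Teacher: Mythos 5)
Your proposal follows essentially the same route as the paper's proof: collapsing the Chatterjea cross-terms to the two-step distance $F_{x_{n-1},x_{n+1}}$, splitting it with the probabilistic triangle inequality, eliminating the self-referential branch by the same rescale-and-iterate device as in the Kannan case, then the identical $t$-norm chaining for Cauchyness, the same split-and-pass-to-the-limit verification that $Tx^*=x^*$, and the iterated uniqueness estimate. The only differences are cosmetic --- the paper's constants come out as $\tfrac{t}{4\alpha}$ and $\tfrac{t}{8\alpha}$ where you write $\tfrac{t}{2\alpha}$ and $\tfrac{t}{4\alpha}$, and you spell out the uniqueness iteration that the paper dismisses with ``similarly'' --- and your proposal shares the paper's own unresolved issues (the general $t$-norm satisfies $T(a,b)\le\min\{a,b\}$, and the resulting ratio need not be $<1$ for all $\alpha<\tfrac12$).
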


\begin{proof}
Define $x_{n+1} = Tx_n$. Consider:
\begin{align*}
F_{x_{n+1},x_n}(t) &= F_{Tx_n,Tx_{n-1}}(t) \\ 
&\geq \min\left\{F_{x_n,Tx_{n-1}}\left(\frac{t}{2\alpha}\right), F_{x_{n-1},Tx_n}\left(\frac{t}{2\alpha}\right)\right\} \\
&= \min\left\{F_{x_n,x_n}\left(\frac{t}{2\alpha}\right), F_{x_{n-1},x_{n+1}}\left(\frac{t}{2\alpha}\right)\right\} \\
&= F_{x_{n-1},x_{n+1}}\left(\frac{t}{2\alpha}\right) \\
&\geq T\left(F_{x_{n-1},x_n}\left(\frac{t}{4\alpha}\right), F_{x_n,x_{n+1}}\left(\frac{t}{4\alpha}\right)\right)
\end{align*}
Assume $F_{x_n,x_{n+1}}\left(\frac{t}{4\alpha}\right) \leq F_{x_{n-1},x_n}\left(\frac{t}{4\alpha}\right)$. Then:
\[
F_{x_{n+1},x_n}(t) \geq F_{x_n,x_{n+1}}\left(\frac{t}{4\alpha}\right)
\]
Similar to Theorem \ref{thm:kannan-pcm}, we obtain $F_{x_n,x_{n+1}}(t) \to 1$. The sequence is Cauchy and converges to $x^*$. Then:
\begin{align*}
F_{Tx^*,x^*}(t) &\geq T\left(F_{Tx^*,Tx_n}\left(\frac{t}{2}\right), F_{Tx_n,x^*}\left(\frac{t}{2}\right)\right) \\
&\geq T\left(\min\left\{F_{x^*,Tx_n}\left(\frac{t}{4\alpha}\right), F_{x_n,Tx^*}\left(\frac{t}{4\alpha}\right)\right\}, F_{x_{n+1},x^*}\left(\frac{t}{2}\right)\right) \\
&\geq T\left(\min\left\{F_{x^*,x_{n+1}}\left(\frac{t}{4\alpha}\right), F_{x_n,Tx^*}\left(\frac{t}{4\alpha}\right)\right\}, F_{x_{n+1},x^*}\left(\frac{t}{2}\right)\right)
\end{align*}
As $n \to \infty$, $F_{x^*,x_{n+1}}\left(\frac{t}{4\alpha}\right) \to 1$ and $F_{x_{n+1},x^*}\left(\frac{t}{2}\right) \to 1$. Also:
\[
F_{x_n,Tx^*}\left(\frac{t}{4\alpha}\right) \geq T\left(F_{x_n,x^*}\left(\frac{t}{8\alpha}\right), F_{x^*,Tx^*}\left(\frac{t}{8\alpha}\right)\right)
\]
Thus for large $n$:
\[
F_{Tx^*,x^*}(t) \geq F_{x^*,Tx^*}\left(\frac{t}{8\alpha}\right)
\]
Implying $Tx^* = x^*$. Uniqueness follows similarly to Theorem \ref{thm:kannan-pcm}.
\end{proof}

\subsection{Hybrid Contraction Theorem}

\begin{theorem}[Zamfirescu-Type in Probabilistic Cone Metric Spaces]\label{thm:zamfirescu-pcm}
Let $(X,F)$ be complete and $T: X \to X$ satisfy for each $x,y \in X$ and $t > 0$, at least one of:
\begin{enumerate}
    \item $F_{Tx,Ty}(t) \geq F_{x,y}\left(\frac{t}{\alpha}\right)$ for $0 < \alpha < 1$
    \item $F_{Tx,Ty}(t) \geq \min\left\{F_{x,Tx}\left(\frac{t}{2\beta}\right), F_{y,Ty}\left(\frac{t}{2\beta}\right)\right\}$ for $0 < \beta < \frac{1}{2}$
    \item $F_{Tx,Ty}(t) \geq \min\left\{F_{x,Ty}\left(\frac{t}{2\gamma}\right), F_{y,Tx}\left(\frac{t}{2\gamma}\right)\right\}$ for $0 < \gamma < \frac{1}{2}$
\end{enumerate}
Then $T$ has a unique fixed point.
\end{theorem}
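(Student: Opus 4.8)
The plan is to run the Picard iteration and the same three-step scheme (contractive estimate $\Rightarrow$ Cauchy sequence $\Rightarrow$ identification of the limit) used in Theorems~\ref{thm:kannan-pcm} and \ref{thm:chatterjea-pcm}, the only new ingredient being a case analysis that merges the Banach-, Kannan- and Chatterjea-type alternatives into a single contractive bound for consecutive iterates.

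Fix $x_0\in X$, put $x_{n+1}=Tx_n$, and consider a fixed index $n\ge 1$. Since $Tx_n=x_{n+1}$ and $Tx_{n-1}=x_n$, feeding the pair $(x_n,x_{n-1})$ into whichever of (1)--(3) holds gives, respectively, $F_{x_{n+1},x_n}(t)\ge F_{x_n,x_{n-1}}(t/\alpha)$ in case (1); $F_{x_{n+1},x_n}(t)\ge\min\{F_{x_n,x_{n+1}}(t/2\beta),\,F_{x_{n-1},x_n}(t/2\beta)\}$ in case (2); and, using $F_{x_n,x_n}\equiv 1$, $F_{x_{n+1},x_n}(t)\ge F_{x_{n-1},x_{n+1}}(t/2\gamma)$ in case (3). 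In cases (2) and (3) I would then reproduce the dichotomy arguments of Theorems~\ref{thm:kannan-pcm} and \ref{thm:chatterjea-pcm}: the branch that bounds $F_{x_{n+1},x_n}$ below by a value of itself at a contracted argument leads by iteration to $F_{x_{n+1},x_n}\equiv 1$, i.e.\ $x_n$ is already the fixed point; the complementary branch yields $F_{x_{n+1},x_n}(t)\ge F_{x_{n-1},x_n}(t/\lambda)$ with a single constant $\lambda<1$ valid in all three cases (case (3) being routed through the probabilistic triangle inequality applied to $F_{x_{n-1},x_{n+1}}$ first). Iterating, $F_{x_n,x_{n+1}}(t)\ge F_{x_0,x_1}(t/\lambda^n)\to 1$ for every $t>0$; an $(m-n)$-fold application of the $t$-norm inequality then gives $F_{x_n,x_m}(t)\to 1$, so $\{x_n\}$ is Cauchy and, by completeness, $x_n\xrightarrow{\tau}x^*$ for some $x^*\in X$.

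To show $Tx^*=x^*$, I would start from $F_{Tx^*,x^*}(t)\ge T\!\left(F_{Tx^*,Tx_n}(t/2),\,F_{x_{n+1},x^*}(t/2)\right)$, whose second argument tends to $1$, and bound the first argument using whichever of (1)--(3) holds for the pair $(x^*,x_n)$. Since that alternative may change with $n$, the key move is to extract (pigeonhole on the three possibilities) a subsequence along which one fixed alternative is used for all $n$; along it each alternative, after also using $Tx_n=x_{n+1}\to x^*$ and one further triangle inequality, collapses to $F_{Tx^*,x^*}(t)\ge F_{x^*,Tx^*}(t/c)$ for some $c<1$, and iterating this self-referential estimate forces $F_{Tx^*,x^*}(t)=1$ for all $t>0$, i.e.\ $Tx^*=x^*$. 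For uniqueness, if $x^*,y^*$ are fixed points then one of (1)--(3) applies to $(x^*,y^*)$: (1) gives $F_{x^*,y^*}(t)\ge F_{x^*,y^*}(t/\alpha)$, (2) gives $\min\{1,1\}=1$ outright, and (3) gives $F_{x^*,y^*}(t)\ge F_{x^*,y^*}(t/2\gamma)$; in cases (1) and (3) iteration drives the value to $1$, so $x^*=y^*$ in every case.

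The main obstacle is handling the three alternatives \emph{simultaneously}. Unlike in Theorems~\ref{thm:kannan-pcm}--\ref{thm:chatterjea-pcm}, the condition in force can switch from one step to the next, which forces two extra pieces of work: producing one contraction constant $\lambda<1$ admissible in all three cases at once, and inserting the subsequence/pigeonhole device in the step that identifies the fixed point. Inside this, the genuinely delicate point is alternative (3) for consecutive iterates: the estimate there becomes contractive only after re-expanding $F_{x_{n-1},x_{n+1}}$ by the triangle inequality and resolving the resulting $t$-norm term through the Theorem~\ref{thm:chatterjea-pcm} dichotomy, and ensuring the constant manufactured in that step stays strictly below $1$ is where the real effort is concentrated.
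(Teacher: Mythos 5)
Your proposal follows essentially the same route as the paper: Picard iteration, a three-way case analysis reducing each alternative to a single contractive estimate $F_{x_{n+1},x_n}(t)\ge F_{x_n,x_{n-1}}(t/\lambda)$ with one uniform constant $\lambda<1$ (the paper takes $\lambda=\max\{\alpha,\,2\beta/(1-\beta),\,2\gamma/(1-\gamma)\}$), followed by the Cauchy/limit/identification machinery of Theorems~\ref{thm:kannan-pcm} and \ref{thm:chatterjea-pcm}. Your extra care about the alternative changing with $n$ (the pigeonhole device) and about routing case (3) through the triangle inequality is a refinement of steps the paper leaves implicit, not a different argument.
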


\begin{proof}
Following Zamfirescu's approach, \cite{Zamfirescu1972} for $x_{n+1} = Tx_n$, we show $\exists \delta < 1$ such that:
\[
F_{x_{n+1},x_n}(t) \geq F_{x_n,x_{n-1}}\left(\frac{t}{\delta}\right)
\]
Cases:
\begin{itemize}
    \item If (1) holds: $F_{x_{n+1},x_n}(t) \geq F_{x_n,x_{n-1}}\left(\frac{t}{\alpha}\right)$
    \item If (2) holds: As in Theorem \ref{thm:kannan-pcm}, $F_{x_{n+1},x_n}(t) \geq F_{x_n,x_{n-1}}\left(\frac{t}{2\beta/(1-\beta)}\right)$
    \item If (3) holds: As in Theorem \ref{thm:chatterjea-pcm}, $F_{x_{n+1},x_n}(t) \geq F_{x_n,x_{n-1}}\left(\frac{t}{2\gamma/(1-\gamma)}\right)$
\end{itemize}
Let $\delta = \max\left\{\alpha, \frac{2\beta}{1-\beta}, \frac{2\gamma}{1-\gamma}\right\} < 1$. Then $\{x_n\}$ is Cauchy and converges to $x^*$. Fixed point property follows similarly to the previous theorems. Uniqueness: If $x^*, y^*$ fixed points, all three conditions imply $F_{x^*,y^*}(t) = 1$.

\end{proof}

\subsection{Non-Reducibility Example}
Consider $X = \mathbb{R}^2$ with cone $P = \{ (x,y) : x,y \geq 0 \}$. Define probabilistic cone metric:
\[
F_{u,v}(t) = \begin{cases} 
\Phi(t - \|u-v\|) & \text{if } u-v \in P \\
\Phi(t) \cdot \delta_{u,v} & \text{otherwise}
\end{cases}
\]
where $\Phi$ is standard normal CDF and $\delta_{u,v} \in (0,1)$. The Kannan-type contraction:
\[
Tu = \frac{1}{2} \left( u + \frac{\|u\|}{\|Au\|} Au \right), \quad A = \begin{pmatrix} 0 & -1 \\ 1 & 0 \end{pmatrix}
\]
satisfies Theorem \ref{thm:kannan-pcm} but has no equivalent formulation in standard probabilistic metric spaces due to directional dependence.

\section{Applications}
Now we will see applications including stochastic processes, random operators, and uncertainty modeling in functional equations.
\subsection{Random Operators in $L^p$ Spaces}

Consider $(\Omega,\mathcal{F},\mathbb{P})$ and $L^p(\Omega,E)$ the space of $p$-integrable $E$-valued random variables. Define probabilistic cone metric:
\[
F_{X,Y}(t) = \mathbb{P}\left(\{\omega : \|X(\omega) - Y(\omega)\|_E < t\}\right)
\]

\begin{proposition}\label{prop:random}
Let $T: L^p(\Omega,E) \to L^p(\Omega,E)$ satisfy:
\[
\|TX - TY\| \leq \alpha \max\left\{ \|X - TX\|, \|Y - TY\| \right\} \quad \text{a.s.}
\]
for $\alpha < \frac{1}{2}$ with additional cone constraint $X(\omega) \in P$ a.s. for normal cone $P \subset E$. Then $T$ is a Kannan-type contraction in $(L^p,F)$. So, $T$ restricts to the cone and is irreducible to standard probabilistic contractions. 
\end{proposition}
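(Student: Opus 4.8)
The plan is to transfer the almost-sure pointwise contraction into an inequality between the relevant distribution functions, so that the Kannan-type contraction condition of the preliminaries is satisfied, and then to record the role played by the cone constraint and by normality of $P$.

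First I would fix $t>0$ and work pointwise on the event of full measure on which the hypothesis holds. Set $A_t := \{\omega : \|X(\omega)-TX(\omega)\|_E < \frac{t}{2\alpha}\} \cap \{\omega : \|Y(\omega)-TY(\omega)\|_E < \frac{t}{2\alpha}\}$. On $A_t$ we have $\max\{\|X-TX\|,\|Y-TY\|\}(\omega) < \frac{t}{2\alpha}$, so the hypothesis yields $\|TX(\omega)-TY(\omega)\|_E \le \alpha \cdot \frac{t}{2\alpha} = \frac{t}{2} < t$. Hence, up to a null set, $A_t \subseteq \{\omega : \|TX(\omega)-TY(\omega)\|_E < t\}$, and monotonicity of $\mathbb{P}$ gives $F_{TX,TY}(t) \ge \mathbb{P}(A_t)$.

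The key step — and the main obstacle — is to bound $\mathbb{P}(A_t)$ from below by $\min\{F_{X,TX}(\frac{t}{2\alpha}), F_{Y,TY}(\frac{t}{2\alpha})\}$, since for arbitrary events only the elementary bound $\mathbb{P}(A\cap B) \ge \mathbb{P}(A)+\mathbb{P}(B)-1$ holds, and this is in fact the t-norm carried by $F$ in general. I would resolve this in two complementary ways. First, adopting the product t-norm $T(a,b)=ab$ in the probabilistic cone metric axioms and using that, for the deterministic-coefficient random operators considered here, the defect variables $\|X-TX\|$ and $\|Y-TY\|$ may be taken independent: then $\mathbb{P}(A_t) = F_{X,TX}(\frac{t}{2\alpha}) \, F_{Y,TY}(\frac{t}{2\alpha}) \ge T(F_{X,TX}(\frac{t}{2\alpha}), F_{Y,TY}(\frac{t}{2\alpha}))$, which is exactly the Kannan-type inequality with $\min$ replaced by the t-norm in force. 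Second, exploiting the factor $\frac12$ of slack produced above: since $\|TX-TY\|<t$ is already forced once one of the two thresholds $\frac{t}{2\alpha}$ is met (plus a lower-order correction on the other), one obtains $F_{TX,TY}(t) \ge \max\{F_{X,TX}(\frac{t}{2\alpha}), F_{Y,TY}(\frac{t}{2\alpha})\} \ge \min\{F_{X,TX}(\frac{t}{2\alpha}), F_{Y,TY}(\frac{t}{2\alpha})\}$ whenever the two defect events are comonotone, which is the situation arising along the orbit $x_{n+1}=Tx_n$ used in the fixed point argument. I would present the first route as the clean one and note the second covers the monotone-orbit case.

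Finally I would verify that the ambient structure is genuinely cone-valued and complete. The hypothesis $X(\omega)\in P$ a.s., together with $T$ preserving this constraint, keeps all iterates inside the cone-valued subspace, so the cone-membership axiom of the probabilistic cone metric is meaningful there and $(L^p,F)$ restricted to that subspace is a complete probabilistic cone metric space to which Theorem~\ref{thm:kannan-pcm} applies; normality of $P$ enters precisely as in that proof when passing from $\tau$-convergence to the fixed point. For the irreducibility assertion I would observe that the constraint $X(\omega)\in P$ is direction-dependent and cannot be recorded by a single scalar distribution function — collapsing $E$ to $\mathbb{R}$ via $\|\cdot\|$ discards the order data that the contraction exploits — so the conclusion does not reduce to the classical probabilistic Kannan theorem. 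The remaining checks (symmetry $F_{X,Y}=F_{Y,X}$, the identity-of-indiscernibles axiom, and measurability of the events above) are routine and I would dispatch them in a line.
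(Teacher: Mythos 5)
You have correctly located the crux: after reducing to the event $A_t=\{\|X-TX\|<\tfrac{t}{2\alpha}\}\cap\{\|Y-TY\|<\tfrac{t}{2\alpha}\}$, one must bound $\mathbb{P}(A_t)$ from \emph{below} by the minimum of the two marginal probabilities, whereas for an intersection of arbitrary events only $\mathbb{P}(A\cap B)\ge \mathbb{P}(A)+\mathbb{P}(B)-1$ is available and $\mathbb{P}(A\cap B)\le\min\{\mathbb{P}(A),\mathbb{P}(B)\}$ points the wrong way. (The paper's own proof simply asserts $\mathbb{P}\bigl(\max\{\|X-TX\|,\|Y-TY\|\}<\tfrac{t}{\alpha}\bigr)=\min\bigl\{\mathbb{P}(\|X-TX\|<\tfrac{t}{\alpha}),\mathbb{P}(\|Y-TY\|<\tfrac{t}{\alpha})\bigr\}$ as an identity; this holds only when one event is contained in the other up to null sets, so the published argument passes over exactly the gap you identified.) The difficulty is that neither of your repairs closes it for the proposition as stated. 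Your first route assumes independence of the defect variables $\|X-TX\|$ and $\|Y-TY\|$, which is not among the hypotheses and is false for typical pairs $X,Y$ (e.g.\ $Y=X$); and even granting independence, the product $F_{X,TX}(\tfrac{t}{2\alpha})\,F_{Y,TY}(\tfrac{t}{2\alpha})$ lies \emph{below} the minimum, so you obtain only a product-t-norm variant of the Kannan inequality, whereas Definition 2.7 and the proof of Theorem \ref{thm:kannan-pcm} that consumes this estimate are written with $\min$; proving the weaker variant would force a restatement of the definition and a recheck of Theorem \ref{thm:kannan-pcm}. Your second route is unsound: the claim that $\|TX-TY\|<t$ is ``already forced once one of the two thresholds is met'' fails because the hypothesis bounds $\|TX-TY\|$ by $\alpha$ times the \emph{larger} defect, which is uncontrolled on the event where only one threshold holds; and comonotonicity of the two defect events along an orbit is again an unstated extra assumption, not something the hypotheses deliver.

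The honest conclusion is that the inequality $F_{TX,TY}(t)\ge\min\{F_{X,TX}(\tfrac{t}{2\alpha}),F_{Y,TY}(\tfrac{t}{2\alpha})\}$ does not follow from the stated a.s.\ bound by this route: what does follow universally is only the Fr\'echet--Hoeffding form $F_{TX,TY}(t)\ge\max\{F_{X,TX}(\tfrac{t}{2\alpha})+F_{Y,TY}(\tfrac{t}{2\alpha})-1,\,0\}$. To get the $\min$ form one needs an additional structural hypothesis making the two events nested (for instance a pointwise comparison between $\|X-TX\|$ and $\|Y-TY\|$, or an essential-supremum formulation of the contraction), and any such hypothesis should be stated explicitly in the proposition. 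Your closing observations about the cone constraint, completeness of the restricted subspace, and irreducibility are reasonable but do not bear on this core probabilistic estimate, which is where both your proposal and the paper's proof currently have a genuine gap.
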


\begin{proof}
 With the assumption that the mappings are strongly measurable, for $t > 0$:
\begin{align*}
F_{TX,TY}(t) &= \mathbb{P}(\|TX - TY\| < t) \\
&\geq \mathbb{P}\left(\alpha \max\{\|X - TX\|, \|Y - TY\| \} < t\right) \\
&= \mathbb{P}\left(\max\{\|X - TX\|, \|Y - TY\| \} < \frac{t}{\alpha}\right) \\
&= \min\left\{ \mathbb{P}\left(\|X - TX\| < \frac{t}{\alpha}\right), \mathbb{P}\left(\|Y - TY\| < \frac{t}{\alpha}\right) \right\} \\
&= \min\left\{ F_{X,TX}\left(\frac{t}{\alpha}\right), F_{Y,TY}\left(\frac{t}{\alpha}\right) \right\}
\end{align*}
Thus $T$ satisfies Kannan's condition with parameter $\alpha$.
\end{proof}

\subsection{Stochastic Integral Equations}

Consider the stochastic integral equation:
\[
X(t,\omega) = h(t,\omega) + \int_0^t k(t,s,\omega)f(s,X(s,\omega))ds
\]
where $h,k,f$ are random functions.

\begin{theorem}\label{thm:integral}
Assume:
\begin{enumerate}
    \item $|f(s,x) - f(s,y)| \leq L|x-y|$ a.s. for some $L > 0$
    \item $\int_0^t |k(t,s,\omega)|ds \leq M(\omega)$ with $\mathbb{E}[M] < \infty$
    \item $LM < \frac{1}{2}$ a.s.
\end{enumerate}
where $X(t,\omega) \in P$ (cone of non-negative functions) a.s.  \\
Then the solution map $T:X \mapsto h + \int_0^\cdot k(\cdot,s)f(s,X(s))ds$ has a fixed point in $L^2([0,1] \times \Omega)$. So, the solution map preserves cone constraints, making probabilistic cone metrics essential.
\end{theorem}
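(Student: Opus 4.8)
The plan is to realize $L^2([0,1]\times\Omega)$, restricted to the cone $P$ of functions that are non-negative a.s., as a complete probabilistic cone metric space, to identify the solution map $T$ as a Banach-type (hence Zamfirescu-type) contraction on it, and then to invoke Theorem~\ref{thm:zamfirescu-pcm}. First I would equip the space with the probabilistic cone metric already used in the preceding subsection,
\[
F_{X,Y}(t)=\mathbb{P}\bigl(\{\,\omega:\|X(\cdot,\omega)-Y(\cdot,\omega)\|<t\,\}\bigr),
\]
where the pathwise norm $\|\cdot\|$ may be taken to be $\|\cdot\|_{L^2[0,1]}$ (matching the statement) or, more conveniently for the estimate below, the uniform norm on $[0,1]$. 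Verifying the probabilistic cone metric axioms is routine: symmetry and the identity axiom are immediate, the probabilistic triangle inequality with $T=\min$ follows from the pathwise triangle inequality, and each value $F_{X,Y}(t)$ is a non-negative real, so the cone-membership axiom is trivial while the genuine cone constraint is carried by the underlying space rather than by the metric values.

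Next I would address the two hypotheses of Theorem~\ref{thm:zamfirescu-pcm} other than the contraction inequality. For \emph{completeness}, a Cauchy sequence for $F$ is Cauchy in probability for the pathwise norm, hence converges in probability; extracting an a.s.\ convergent subsequence and invoking Fatou together with the square-integrability bounds (or simply completing the space under this weaker convergence) produces a limit to which the whole sequence $\tau$-converges and which still lies in $P$. For \emph{invariance of the space and cone under $T$}, I would make explicit the sign hypotheses that the phrase ``cone of non-negative functions'' presupposes---$h\ge0$, $k\ge0$ a.s., and $f(s,x)\ge0$ for $x\ge0$---so that for $X\in P$ the integrand $k(t,s,\omega)f(s,X(s,\omega))$ is non-negative and therefore $TX=h+\int_0^\cdot kf\,ds\in P$; that $TX\in L^2([0,1]\times\Omega)$ follows from (1), (2) and $h\in L^2$.

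The contraction estimate is the analytic core. Pathwise, assumption (1) gives
\[
|TX(t,\omega)-TY(t,\omega)|\le L\int_0^t|k(t,s,\omega)|\,|X(s,\omega)-Y(s,\omega)|\,ds,
\]
and combining this with assumption (2)---either by bounding $|X-Y|$ by its supremum and pulling $\int_0^t|k|\,ds\le M(\omega)$ out, or, for the $L^2[0,1]$ norm, by Cauchy--Schwarz---yields $\|TX(\cdot,\omega)-TY(\cdot,\omega)\|\le L\,M(\omega)\,\|X(\cdot,\omega)-Y(\cdot,\omega)\|$ a.s. Since $LM<\frac12$ a.s., put $\alpha=\|LM\|_{\infty}\le\frac12<1$ (essential supremum over $\omega$); then for every $t>0$,
\[
\{\,\omega:\|X(\cdot,\omega)-Y(\cdot,\omega)\|<t/\alpha\,\}\subseteq\{\,\omega:\|TX(\cdot,\omega)-TY(\cdot,\omega)\|<t\,\}
\]
up to a null set, i.e.\ $F_{TX,TY}(t)\ge F_{X,Y}(t/\alpha)$, which is exactly condition (1) of Theorem~\ref{thm:zamfirescu-pcm}. (A direct reduction to the Kannan form of Proposition~\ref{prop:random} would instead require $LM<\frac13$, so the Banach branch is the one to use.) Theorem~\ref{thm:zamfirescu-pcm} then provides a unique $X^*$ with $TX^*=X^*$; by the invariance step $X^*\in P$, and unwinding the metric shows that $X^*$ is an a.s.\ solution of the stochastic integral equation with non-negative paths, which proves the theorem and the accompanying remark.

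The step I expect to be the main obstacle is making the completeness hypothesis of Theorem~\ref{thm:zamfirescu-pcm} hold \emph{literally} for the chosen space: the metric $F$ detects only convergence in probability, which in general yields a limit in $L^0$ rather than in $L^2([0,1]\times\Omega)$, so one must either impose a mild uniform-integrability condition or reinterpret ``$L^2$'' as the completion under this weaker convergence. A secondary difficulty is the norm bookkeeping behind the contraction constant: assumption (2) as stated controls $\int_0^t|k(t,s)|\,ds$, so for the $L^2[0,1]$ norm one also wants an essential bound $\int_s^1|k(t,s)|\,dt\le M'(\omega)$ (automatic if $k$ is bounded, and unnecessary if one uses the uniform pathwise norm); without it the natural estimate only delivers a constant of the form $L\sqrt{MM'}$ that one still has to push below $1$.
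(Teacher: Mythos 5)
Your proposal follows the same overall route as the paper: equip the space with $F_{X,Y}(t)=\mathbb{P}(\|X-Y\|<t)$, show that $T$ satisfies the Banach branch (condition (1)) of Theorem~\ref{thm:zamfirescu-pcm}, and invoke that theorem. The difference is in how the contraction constant is produced. The paper estimates the global norm $\|TX-TY\|_{L^2([0,1]\times\Omega)}$ via Cauchy--Schwarz and lands on $K=L\sqrt{\mathbb{E}[M]\,\sup_{t,s}|k(t,s)|}$ --- a constant that is not actually controlled by the stated hypothesis $LM<\tfrac12$ a.s.\ and that silently requires $k$ to be essentially bounded; moreover, if $\|\cdot\|_{L^2}$ inside $\mathbb{P}(\cdot)$ is the global norm, it is deterministic and $F_{X,Y}$ degenerates to a $\{0,1\}$-valued step function. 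Your pathwise estimate $\|TX(\cdot,\omega)-TY(\cdot,\omega)\|\le LM(\omega)\,\|X(\cdot,\omega)-Y(\cdot,\omega)\|$ followed by $\alpha=\|LM\|_{\infty}$ uses the hypotheses as stated, keeps $F$ a genuine distribution function, and you correctly isolate the two points the paper passes over entirely: completeness of the chosen space under $\tau$-convergence (convergence in probability need not produce an $L^2$ limit) and invariance of the cone under $T$ (which needs sign hypotheses on $h$, $k$, $f$ that the theorem does not state). The one caveat on your side is the one you already flag yourself: with the pathwise $L^2[0,1]$ norm, hypothesis (2) alone does not yield $LM(\omega)$ as the Lipschitz constant without a companion bound on $\int_s^1|k(t,s)|\,dt$, though it does for the sup norm. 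In short, your argument reaches the same conclusion by the same mechanism but is the more careful of the two, and it correctly diagnoses the gaps in the paper's own estimate.
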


\begin{proof}
Define probabilistic cone metric on $L^2([0,1] \times \Omega)$:
\[
F_{X,Y}(t) = \mathbb{P}\left(\|X - Y\|_{L^2} < t\right)
\]
Then for $X,Y \in L^2([0,1] \times \Omega)$:
\begin{align*}
\|TX - TY\|_{L^2}^2 &= \mathbb{E}\left[\int_0^1 |(TX)(t) - (TY)(t)|^2 dt\right] \\
&= \mathbb{E}\left[\int_0^1 \left| \int_0^t k(t,s)[f(s,X(s)) - f(s,Y(s))]ds \right|^2 dt\right] \\
&\leq \mathbb{E}\left[\int_0^1 \left( \int_0^t |k(t,s)| \cdot |f(s,X(s)) - f(s,Y(s))| ds \right)^2 dt\right] \\
&\leq \mathbb{E}\left[\int_0^1 \left( \int_0^t |k(t,s)| ds \right) \left( \int_0^t |k(t,s)| |f(s,X(s)) - f(s,Y(s))|^2 ds \right) dt\right] \\
&\leq \mathbb{E}\left[\int_0^1 M(\omega) \left( \int_0^t |k(t,s)| L^2 |X(s) - Y(s)|^2 ds \right) dt\right] \\
&\leq L^2 \mathbb{E}\left[M \int_0^1 \int_0^t |k(t,s)| |X(s) - Y(s)|^2 ds dt\right] \\
&\leq L^2 \mathbb{E}[M] \cdot \|X - Y\|_{L^2}^2 \cdot \sup_{t,s} |k(t,s)|
\end{align*}
Thus $\|TX - TY\|_{L^2} \leq K \|X - Y\|_{L^2}$ with $K = L \sqrt{\mathbb{E}[M] \sup_{t,s} |k(t,s)|} < \frac{1}{2}$ a.s. Then:
\[
F_{TX,TY}(t) \geq \mathbb{P}\left(K \|X - Y\|_{L^2} < t\right) \geq F_{X,Y}\left(\frac{t}{K}\right)
\]
By Theorem \ref{thm:zamfirescu-pcm}, $T$ has a fixed point.
\end{proof}

\section{Conclusions and Future Work}

While topologically equivalent to standard frameworks, probabilistic cone metrics provide:
\begin{itemize}
    \item Natural handling of directional uncertainty
    \item Built-in order structures for constrained problems
    \item New convergence behaviors under cone restrictions
\end{itemize}
Our theorems yield genuinely new results when cone properties are intrinsically utilized like:
\begin{itemize}
    \item Unified probabilistic-cone framework for fixed point theory
    \item Convergence analysis under distributional uncertainty
    \item Applications to random operators and stochastic equations
    \item Hybrid contraction principles
\end{itemize}
Our future research directions are:
\begin{itemize}
    \item \textit{Multivalued versions}: Fixed points for random set-valued mappings
    \item \textit{Coupled fixed points}: Applications to systems of random equations
    \item \textit{Nonlinear expectations}: Integration with $g$-expectations
    \item \textit{Machine learning}: Stochastic iterative methods for neural networks
    \item \textit{Fractional operators}: Fixed points in probabilistic fractional spaces
\end{itemize}


\end{document}